\pgfplotsset{compat=1.11}
\newtheorem{Theorem}{Theorem}
\newtheorem{definition}[Theorem]{Definition}
\newtheorem{remark}[Theorem]{Remark}
\providecommand{\keywords}[1]
{
  \small	
  \textbf{\textit{Keywords---}} #1
}
\numberwithin{equation}{section}
\numberwithin{Theorem}{section}
\DeclareMathOperator*{\esssup}{ess\,sup}
\DeclareMathOperator*{\cadlag}{\mathit{D}([0, T], \mathbb{R}^d)}
\DeclareMathOperator*{\R}{\mathbb{R}}
\DeclareMathOperator*{\DifferentialSetLeft}{\mathscr{C}^{i, j}_l([0, T], \R^d)}
\DeclareMathOperator*{\DifferentialSet}{\mathscr{C}^{i, j}([0, T], \R^d)}
\DeclareMathOperator*{\xproc}{\mathit{(X(t))}_{t \in [0, T]}}
\begin{document}
\title{An Optimal Functional It\^{o}'s Formula For L\'{e}vy Processes}
\author{Christian Houdr\'{e}\footnote{School of Mathematics, Georgia Institute of Technology, Atlanta, GA, 30332-0160, USA, houdre@math.gatech.edu. Research supported in part by the Simons Foundation grant \#524678.}\,\, and Jorge V\'{i}quez\footnote{Research done as a student in School of Mathematics, Georgia Institute of Technology, Atlanta, GA, 30332-0160, USA, jviquez6@gatech.edu.\
Research supported in part by the Department of Mathematics of the University of Costa Rica.}}

\maketitle

\begin{abstract}
Several versions of It\^{o}'s formula have been obtained in the setting of the functional stochastic calculus. In this regard, we present a local time-space version that works for arbitrary bounded and continuous functionals of L\'{e}vy processes and which does not depend on a functional's H\"{o}lder continuity.
\end{abstract}

\keywords{Functional Stochastic Calculus, It\^{o}'s formula, Semi-martingales, L\'{e}vy Processes, Local Times.}\\

\textit{Mathematics Subject Classification}. 60H05, 60H7, 60H15, 60H25, 60G51.

\section{Introduction, Notations and Definitions}

Dupire \cite{Dupire09} introduced notions of vertical and horizontal derivatives leading to a version of It\^{o}'s formula,  which has proved useful in financial applications and has seen  further generalizations, e.g., see \cite{CONT20101043}, \cite{Cont13}, \cite{LEVENTAL13}, \cite{OberhauserStratonovich}, \cite{Oberhauser}, \cite{Saporito}, \cite{Bouchard}, \cite{bouchardOctober}.  
With a different approach, \cite{FunctionalLocalTime} extended ideas from \cite{2DYoung} to obtain a corresponding formula for functionals with weak vertical derivatives of bounded $(p, q)$-variation.  
Below, instead, we develop in the functional setting Eisenbaum's,   \cite{Eisenbaum00}, \cite{EISENBAUM}, \cite{ReversibleLocalTime}, \cite{EisenbaumParabolic}, \cite{Eisenbaum09}, local time-space approach to It\^{o}'s formula, and obtain an optimal functional It\^{o}'s formula for multivariate L\'{e}vy processes.

The notations used in the present manuscript are the same as those  used in \cite{thesis}, some of them come from \cite{Dupire09}, some from \cite{Cont13, bouchardOctober}, and some are new.  We work on  the space of continuous functions $C([0, T], \mathbb{R}^d)$ and on the space of c\`{a}dl\`{a}g functions $D([0, T], \mathbb{R}^d)$ with domain $[0, T]$ and co-domain $\mathbb{R}^d$; defining the canonical random process $(X(t))_{t \in [0, T]}$ on the latter.  We also work on a filtered probability space $(D([0, T], \mathbb{R}^d), \mathcal{F}, (\mathcal{F}_t)_{t \in [0, T]}, \mathbb{P})$, where  $\mathcal{F}$ is a $\sigma$-field with $\sigma\{X(t):t \in [0, T]\} \subseteq \mathcal{F}$, $(\mathcal{F}_t)_{t \in [0, T]}$ is 
a right-continuous filtration completed with respect to $\mathbb{P}$, and $(X(t))_{t \in [0, T]}$ is progressively adapted to the filtration.  Finally, $\mathbb{P}$ is a probability measure for which $(X(t))_{t \in [0, T]}$ is a c\`{a}dl\`{a}g semi-martingale.

Next, for $w \in D([0, T], \mathbb{R}^d)$, let $w_{\wedge t}$ denotes a path that has been stopped at $t \in [0,T]$, and horizontally extended, i.e., 

\begin{align}
    w_{\wedge t}(s) &:= 
    \begin{cases} 
    w(s), & \text{if } s < t. \\
    w(t), & \text{if } s \geq t;  
    \end{cases}\nonumber
\end{align}
while the spatial perturbation of $w$ at $t\in [0,T]$ in the direction of $h \in \mathbb{R}^d$, is:

\begin{align}
    w_{\wedge t}^h(s) &:= 
    \begin{cases} 
    w(s), & \text{if } s < t. \\
    w(t)+h, & \text{if } s \geq t. 
    \end{cases}\nonumber
\end{align}

Throughout, functionals $F:[0,T]\times D([0,T], \mathbb{R}^d) \to \mathbb{R}$ are assumed to be measurable with respect to $\mathcal{B}([0,T])\otimes \mathcal{F}$, where $\mathcal{B}([0,T])$ is the Borel $\sigma$-field of $[0,T]$. Further, to describe the regularity properties of these functionals, $[0,T]\times D([0,T], \mathbb{R}^d)$ is equipped with a pseudo-metric $d_*$: 

\begin{align}
    d_*((t,w), (s,v)) := |t-s|+d_D(w_{\wedge t},v_{\wedge s}),\nonumber 
\end{align}
where $d_D$ is any metric on $D([0,T],\mathbb{R}^d)$ weaker than the one induced by the uniform norm; and thus includes the metrics associated with the Skorokhod topology. 

The change of variables formula we study describes the functional's sensitivity to horizontal extensions, where the last value is kept constant, and shocks on the last observed value of the process. With this in mind, we start with Dupire's definitions of derivatives in time and space. 

\begin{definition}\label{Horizontal Derivative}
A functional $F$ is horizontally-(time)-differentiable at 
$(t,x) \in [0,T)\times D([0,T],\mathbb{R}^d)$, if the following limit exists:
\begin{align}\label{eq:horizontal_derivative}
    DF(t,x_{\wedge t}) &:= \lim_{h\to 0^+} \frac{F(t+h,x_{\wedge t})-F(t,x_{\wedge t})}{h}.
\end{align}
\end{definition}
Thus, if the limit in \eqref{eq:horizontal_derivative} exists for any pair $(t,x)$, the functional $DF: [0, T)\times D([0, T],\mathbb{R}^d) \to \mathbb{R}$ associating $(t,x)$ to its horizontal derivative is well defined. In addition, \cite{Dupire09} defines the space derivative as:  
\begin{definition}
    A functional $F$ is space-differentiable at $(t,x) \in [0,T]\times D([0, T],\mathbb{R}^d)$, in the direction of the canonical basis vector $e_i$, $i \in \{1,...,d\}$, if the following limit exists:
\begin{align}\label{space_derivative}
    \partial_i F(t,x_{\wedge t}) &:= \lim_{h \to 0}\frac{F(t,x_{\wedge t}^{he_i})-F(t,x_{\wedge t})}{h}.
\end{align}
\end{definition}
\noindent 
Thus, if the limit in \eqref{space_derivative} exists for any pair $(t,x)$, the functional $\partial_i F: [0, T]\times D([0, T],\mathbb{R}^d) \to \R$ associating a pair to its space derivative is well defined.  Moreover, if $F$ is differentiable in the direction of all the canonical basis vectors $e_i, i =1,...,d$, $F$ is called space differentiable and its gradient is: 
\begin{align}
    \nabla F(t,x_{\wedge t}) &:= (\partial_1 F(t,x_{\wedge t}), \partial_2 F(t,x_{\wedge t}),...,\partial_d F(t,x_{\wedge t})).\nonumber
\end{align}

\noindent Finally, if each $\partial_i F$ is differentiable in the direction of all the canonical basis vectors $e_j: j = 1,2,...,d$, the Hessian $\nabla^2$ is the matrix such that $(\nabla^2 F(t,x_{\wedge t}))_{1\le i,j\le d} = (\partial_{i,j} F(t,x_{\wedge t}))_{1\le i,j\le d}$ under conditions ensuring that  $(\partial_i(\partial_j F(t,x_{\wedge t})))_{1\le i,j\le d} = (\partial_j(\partial_i F(t,x_{\wedge t})))_{1\le i,j\le d}$.\\

Recall that a functional $F$ is \emph{non-anticipative} if for all $(t,x) \in [0,T]\times D([0,T],\mathbb{R}^d)$,
    \begin{align}
        F(t,x) &= F(t,x_{\wedge t}).\nonumber
    \end{align}
If $F$ is a non-anticipative functional, and $x,y \in [0,T]\times D([0,T],\mathbb{R}^d)$ are such that, for $t \in [0,T]$, $x_{\wedge t} = y_{\wedge t}$, then $F(t,x) = F(t,y)$.

Note that the definitions of horizontal (time) and space differentiability still apply to anticipative functionals. However, when defining the space derivative, the actual values of the function $x$ should be taken instead of an horizontal extension past time $t$. For example, if $s > 0, (t,x) \in [0,T]\times D([0,T],\mathbb{R})$, and $F(t,x) = x(t+s)$. Then,
\begin{align*}
    \nabla F(t,x) &= \lim_{h \to 0} \frac{x(t+s)+h-x(t+s)}{h}= 1.
\end{align*}

\noindent \emph{Throughout, all functionals will be taken to be non-anticipative.}

Next, notions of regularity for the functionals and their derivatives are recalled, e.g., \cite{Cont13}.

\begin{definition}
    A functional $F$ is said to be fixed-time continuous at $(t,x) \in [0,T]\times D([0,T],\mathbb{R}^d)$ if it is continuous after fixing the time variable $t$, i.e., if for all $\epsilon > 0$, there exists a $\delta > 0$ such that for all $y\in D([0,T],\mathbb{R}^d)$,
    \begin{align}
        d_D(x_{\wedge t}, y_{\wedge t}) < \delta \implies |F(t,x_{\wedge t})-F(t,y_{\wedge t})| < \epsilon.\nonumber
    \end{align}
\end{definition}

\noindent If $F$ is fixed-time continuous at all $(t,x) \in [0,T]\times D([0,T],\mathbb{R}^d)$, then $F$ is said to be fixed-time continuous. 

\begin{definition}
    A functional $F$ is said to be left-continuous (in time) at $(t,x) \in [0,T]\times D([0,T],\mathbb{R}^d)$ if for all $\epsilon > 0$, there exists $\delta > 0$ such that for all $y \in \cadlag$,
    \begin{align}
        d_*((t,x_{\wedge t}), (s, y_{\wedge s})) <\delta,\, s\leq t \implies |F(t,x_{\wedge t})-F(s,y_{\wedge s})|< \epsilon.\nonumber
    \end{align}
\end{definition}
    
\noindent A functional that is left-continuous for all $(t,x)\in [0, T]\times $ is said to be left-continuous in time and following the notation of \cite{Cont13}, the set of such functionals is denoted by $\mathscr{C}_l([0,T],\mathbb{R}^d)$. Right-continuity in time is defined analogously, and a functional is said to be continuous in time if it is both left and right continuous in time. The set of such functionals is denoted by $\mathscr{C}([0,T],\mathbb{R}^d)$.

The following is an analog of locally bounded functions,
    \begin{definition}
        A functional $F:[0,T]\times D([0,T],\mathbb{R}^d)\to \mathbb{R}$ is said to be boundedness-preserving if for every $t \in [0,T)$ and compact set $K \subset \R^d$, there exists a constant $C_{t,K}$ such that:
        \begin{align}
            x(s) \in K,\,\, \forall s \in [0,t] \implies |F(s,x_{\wedge s})| \leq C_{t,K},\,\,\forall s \in [0,t].\nonumber
        \end{align}
    \end{definition}
\noindent The set of boundedness-preserving functionals is denoted $\mathscr{C}_b([0,T],\mathbb{R}^d)$
    
    Throughout, let $\mathscr{C}^{0,0}_l([0,T],\mathbb{R}^d) := \mathscr{C}_b([0,T],\mathbb{R}^d)\cap \mathscr{C}_l([0,T],\mathbb{R}^d)$, and for $i,j$ not both $0$, let $\DifferentialSetLeft$ denote the set of functionals $F:[0, T]\times D([0,T],\mathbb{R}^d)\to \R$ that are $i$-times  horizontally differentiable and $j$-times space differentiable, such that $F$ and all its space derivatives are in $\mathscr{C}^{0,0}_l([0,T],\mathbb{R}^d)$, while the horizontal derivatives are in $\mathscr{C}_b([0,T],\mathbb{R}^d)$ and are continuous at fixed-times.
         
    In an analogous manner $\mathscr{C}^{0,0}([0,T],\mathbb{R}^d) := \mathscr{C}_b([0,T],\mathbb{R}^d)\cap \mathscr{C}([0,T],\mathbb{R}^d)$ and for $i,j$ not both $0$, let $\DifferentialSet$ denote the set of functionals that are $i$-times  horizontally differentiable and $j$-times space differentiable, such that $F$ and all its space derivatives are in $\mathscr{C}^{0,0}([0,T],\mathbb{R}^d)$, while the horizontal derivatives are in $\mathscr{C}_b([0,T],\mathbb{R}^d)$ and are continuous at fixed-times.
    
    With these notations, the functional It\^{o}'s formula established in \cite{Dupire09} and generalized in \cite{CONT20101043, Cont13} becomes:   

    \begin{Theorem}\label{Functional_Ito_Formula}
    Let $F \in \mathscr{C}^{1,2}_l([0,T],\mathbb{R}^d)$, let $X$ be a continuous semi-martingale under the probability space $(\Omega,\mathcal{F},(\mathcal{F})_{t \in [0, T]}, \mathbb{P})$, and let $([X](t))_{t \in [0, T]}$ be its quadratic covariation. Then,
    \begin{align}\label{eq:FunctionalIto}
        F(t,X_{\wedge t})-F(0,X_{\wedge 0}) = \int_0^t DF(s,X_{\wedge s})\,\mathrm{d}s&+\int_0^t \nabla F(s,X_{\wedge s})\cdot \mathrm{d}X(s)
     +\frac{1}{2}\int_0^t Tr(\nabla^2 F(s,X_{\wedge s})\mathrm{d}[X](s)),\text{a.s.}
    \end{align}
    \end{Theorem}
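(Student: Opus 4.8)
The plan is to prove \eqref{eq:FunctionalIto} by approximating $X$ along a refining sequence of partitions and passing to the limit, the functional analogue of the classical proof of It\^{o}'s formula: time increments are absorbed by the horizontal derivative $DF$, spatial increments by a second-order Taylor expansion in $\nabla F$ and $\nabla^2 F$.

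\emph{Step 1: localization.} Decompose $X = M + A$ into a continuous local martingale and a continuous adapted finite-variation process, and set $\tau_K := \inf\{s : |X(s)| \ge K\}\wedge T$. On $[0,\tau_K]$ the path of $X$ stays in the compact ball $\overline{B}(0,K)$, so the boundedness-preserving hypothesis makes $F$, $\nabla F$, $\nabla^2 F$ and $DF$ bounded along $X_{\wedge s}$ by deterministic constants; a further routine localization makes $M$ an $L^2$-martingale and $A$ of integrable total variation. Since every term of \eqref{eq:FunctionalIto} is continuous in $t$, it suffices to prove the identity on each $[0,\tau_K]$ and let $K\uparrow\infty$.

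\emph{Step 2: discretization.} Fix $t$ and partitions $0 = t_0 < \dots < t_{m_n} = t$ of mesh tending to $0$ (I suppress the index $n$ from $t_i$), and put $\Delta_i X := X(t_{i+1}) - X(t_i)$. Replacing $X$ by the step path $X^n$ equal to $X(t_i)$ on $[t_i, t_{i+1})$ (and to $X(t)$ at $t$), one has $F(t, X^n_{\wedge t}) \to F(t, X_{\wedge t})$ almost surely by fixed-time continuity, as $\|X^n - X\|_{\infty,[0,t]}\to 0$. Telescoping along the partition and noting that $X^n_{\wedge t_{i+1}}$ is obtained from the frozen path $X^n_{\wedge t_i}$ by shifting its terminal value by $\Delta_i X$, each increment $F(t_{i+1}, X^n_{\wedge t_{i+1}}) - F(t_i, X^n_{\wedge t_i})$ decomposes as
\begin{align*}
\underbrace{\big[F(t_{i+1}, X^n_{\wedge t_i}) - F(t_i, X^n_{\wedge t_i})\big]}_{\text{horizontal}} \;+\; \underbrace{\big[F\big(t_{i+1}, (X^n_{\wedge t_i})^{\Delta_i X}_{\wedge t_{i+1}}\big) - F(t_{i+1}, X^n_{\wedge t_i})\big]}_{\text{vertical}}.
\end{align*}
Since $u \mapsto F(u, X^n_{\wedge t_i})$ is continuous on $[t_i,T]$ with right-derivative $DF(u, X^n_{\wedge t_i})$, the fundamental theorem of calculus (in the form valid for such functions under the stated regularity of $DF$, cf.\ \cite{CONT20101043, Cont13}) rewrites the horizontal term as $\int_{t_i}^{t_{i+1}} DF(s, X^n_{\wedge t_i})\,\mathrm{d}s$. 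The vertical term is the increment, from $0$ to $\Delta_i X$, of the map $\mathbb{R}^d \ni h \mapsto F(t_{i+1}, (X^n_{\wedge t_i})^{h}_{\wedge t_{i+1}})$, which by the very definitions of the space derivatives is $C^2$ with first and second differentials $\nabla F$ and $\nabla^2 F$ evaluated at the perturbed path; a second-order Taylor expansion therefore yields
\begin{align*}
\nabla F(t_{i+1}, X^n_{\wedge t_i})\cdot\Delta_i X \;+\; \tfrac12 \operatorname{Tr}\!\big(\nabla^2 F(t_{i+1}, X^n_{\wedge t_i})\, \Delta_i X\, \Delta_i X^{\top}\big) \;+\; \rho_i^n,
\end{align*}
where $|\rho_i^n| \le |\Delta_i X|^2\, \varpi_i^n$ and $\varpi_i^n$ is a modulus of continuity of $\nabla^2 F$ along the relevant segment of perturbed paths at time $t_{i+1}$.

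\emph{Step 3: passage to the limit, and the main obstacle.} Summing over $i$ and letting $n\to\infty$: the horizontal sum converges to $\int_0^t DF(s, X_{\wedge s})\,\mathrm{d}s$ by left-continuity of $F$ in time and dominated convergence; the first-order vertical sum $\sum_i \nabla F(t_{i+1}, X^n_{\wedge t_i})\cdot\Delta_i X$ converges in probability to $\int_0^t \nabla F(s, X_{\wedge s})\,\mathrm{d}X(s)$, the essential points being that $\nabla F(t_{i+1}, X^n_{\wedge t_i})$ is $\mathcal{F}_{t_i}$-measurable, and that by the left-continuity of $F$ and of its space derivatives — the hypothesis $F \in \mathscr{C}^{1,2}_l([0,T],\mathbb{R}^d)$ — together with $X^n_{\wedge t_i}\to X_{\wedge s}$, the summand identifies in the limit with the integrand $\nabla F(s, X_{\wedge s})$ against the semi-martingale $X = M+A$; and the second-order vertical sum converges to $\tfrac12\int_0^t \operatorname{Tr}(\nabla^2 F(s, X_{\wedge s})\,\mathrm{d}[X](s))$ because $X$ is continuous, so that the discrete quadratic covariations converge to $[X]$ while the left-continuous weight $\nabla^2 F(s, X_{\wedge s})$ may be frozen on each subinterval (approximate it by step processes, use the It\^{o} isometry, and note $\sum_i|\Delta_i X|^2 = O_{\mathbb{P}}(1)$). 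Finally $\sum_i \rho_i^n \to 0$ since $\max_i \varpi_i^n \to 0$ and $\sum_i|\Delta_i X|^2$ is bounded in probability. Extracting a subsequence along which all convergences are almost sure gives \eqref{eq:FunctionalIto} on $[0,\tau_K]$, and $K\uparrow\infty$ concludes. The hard part is that all the $F$-, $\nabla F$-, $\nabla^2 F$- and $DF$-continuity estimates above must hold \emph{uniformly} over the growing number of summands, whereas the $\mathscr{C}_b$ hypotheses only supply bounds; the remedy is that, for each fixed $\omega$, the map $s\mapsto X_{\wedge s}(\omega)$ is continuous from $[0,t]$ into $D([0,T],\mathbb{R}^d)$ equipped with the uniform norm, so that all the stopped and terminally-perturbed step paths arising across the partitions lie in a single $d_D$-compact subset of path space, on which the functionals do have uniform (one-sided-in-time) continuity moduli. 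Securing this uniformity, hand in hand with the convergence of the left-endpoint Riemann sums against $M$, $A$ and $[X]$, is the crux; everything else is bookkeeping.
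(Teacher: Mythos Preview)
The paper does not prove Theorem~\ref{Functional_Ito_Formula}: it is quoted as the known functional It\^{o} formula of Dupire and Cont--Fourni\'e (\cite{Dupire09, CONT20101043, Cont13}) and serves only as background for the paper's main contribution, Theorem~\ref{Optimal_Functional}. Your sketch is correct and reproduces precisely the approach in those references---localize, form step-path approximations along a refining partition, split each increment into a horizontal piece (absorbed by $DF$ via a one-sided FTC) and a vertical piece (handled by a second-order Taylor expansion in the perturbation variable), then pass to the limit using the left-continuity and boundedness-preserving hypotheses to control all summands uniformly. The partition scheme you use is in fact the same one the paper borrows from \cite{CONT20101043} in the proof of Theorem~\ref{Optimal_Functional}, so there is nothing to contrast: your proposal \emph{is} the cited proof.
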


\section{An Optimal Functional It\^{o}'s Formula}

The main objective below is to relax the space differentiability conditions on $F$ and the convergence requirements in the last integral in \eqref{eq:FunctionalIto}, by developing functional analogs of the local time-space arguments of \cite{Eisenbaum09}.  
A first step is to note that the definition of the integral with respect to local time found in \cite[Section 2]{Eisenbaum09} continues to hold when an additional process $Y \in \cadlag$, independent of the Brownian component of $(X(t))_{t\in [0,T]}$, is added to the functional. This fact allows for the use of the martingale properties of the Brownian motion to separate the present from the past and the process' jumps while studying the increments of $F(t,X_{\wedge t})$, and thus obtain the following:  \\

\begin{Theorem}\label{Integral with respect to local time}
Let $F:[0,T]\times \cadlag \times \mathbb{R}^d \to \mathbb{R}$ be boundedness preserving, let $(B(t))_{t \in [0,T]}$ be a $d$-dimensional standard Brownian motion adapted to the filtration $(\mathcal{F}_t)_{t \in [0, T]}$, and let $(Y(t))_{t\in [0,T]}$ be a progressively measurable c\`{a}dl\`{a}g process, such that, $\mathcal{Y} = \sigma\{Y(t):t \in [0,T]\}$ is independent of $\sigma\{B(t):t \in [0,T]\}$. Then, the integral with respect to the Brownian motion's local time measure $dL^x_s(B)$ satisfies for $j=1, \dots, d$:
\begin{align}
    \int_0^t\int_{\mathbb{R}}F(s,Y_{\wedge s^-},B(s)|_{B^j(s) = x})\,\mathrm{d}L^x_s(B^j) = &\int_0^t\!\!\! F(s,Y_{\wedge s^-},B(s))\,\mathrm{d}B^j(s)+\int_{T-t}^T\!\!\!\!\!\!F(T-s,Y_{\wedge (T-s)^-}, \hat{B}(s))\,\mathrm{d}\hat{B}^j(s),\nonumber
\end{align}
\noindent where for $x = (x^1,...,x^d) \in \mathbb{R}^d$, $x|_{x^j = y} = (x^1,...,x^{j-1},y,x^{j+1},...,x^d)$, and 
$\hat{x}(t) = (\hat{x}^1(t),...,\hat{x}^d(t))$ $ = (x^1((T-t)^-),...,x^d((T-t)^-))$.
\end{Theorem}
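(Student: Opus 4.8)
\section*{Proof proposal}

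The plan is to reduce the asserted identity to Eisenbaum's identity for deterministic time--space integrands by conditioning on $\mathcal{Y}$, after first checking that the two stochastic integrals on the right-hand side are well defined in an enlarged-filtration setting.

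First I would fix the filtrations. Let $\mathcal{G}_s := \sigma\{B(u):u\le s\}\vee\mathcal{Y}$ and $\hat{\mathcal{G}}_s := \sigma\{B(u):u\ge T-s\}\vee\mathcal{Y}$. Since $\mathcal{Y}$ is independent of $\sigma\{B(t):t\in[0,T]\}$, the immersion property under independent enlargement shows that $B$ is a $(\mathcal{G}_s)$-standard Brownian motion, and that the time-reversed process $\hat B(s)=B((T-s)^-)=B(T-s)$ is a $(\hat{\mathcal{G}}_s)$-standard Brownian motion (the latter also using the classical fact that $\hat B$ is a Brownian motion for its own filtration). The integrand $s\mapsto F(s,Y_{\wedge s^-},B(s))$ is then $(\mathcal{G}_s)$-progressive, and $s\mapsto F(T-s,Y_{\wedge(T-s)^-},\hat B(s))$ is $(\hat{\mathcal{G}}_s)$-progressive, because $Y_{\wedge s^-}$ and $Y_{\wedge(T-s)^-}$ are $\mathcal{Y}$-measurable. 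Square integrability on compact time intervals is obtained by localizing with the stopping times $\tau_K=\inf\{s:|B(s)|\ge K\}$ on the events $\{\sup_{s\in[0,T]}|Y(s)|\le K\}$ and invoking the boundedness-preserving property of $F$; both stochastic integrals on the right-hand side are thus well defined, and the well-definedness of the local-time--space integral on the left will come out of the conditioning argument.

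The heart of the argument is the conditioning step. Using a regular conditional probability $\mathbb{P}(\cdot\mid\mathcal{Y})$, independence of $\mathcal{Y}$ and $B$ implies that under $\mathbb{P}(\cdot\mid\mathcal{Y})$ the process $B$ is still a standard Brownian motion while $Y$ is frozen to a fixed path $w\in D([0,T],\mathbb{R}^d)$. For such a frozen $w$, the map $(s,x)\mapsto F(s,w_{\wedge s^-},x)$ is a deterministic, locally bounded time--space function, so the construction and identity of \cite[Section~2]{Eisenbaum09} apply verbatim and give, $\mathbb{P}(\cdot\mid\mathcal{Y})$-a.s., that the local-time--space integral of this function against $dL^x_s(B^j)$ equals the forward It\^o integral against $dB^j$ plus the backward It\^o integral against $d\hat B^j$. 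I would then remove the conditioning: the conditional It\^o (resp.\ backward It\^o) integral of a $(\mathcal{G}_s)$- (resp.\ $(\hat{\mathcal{G}}_s)$-) progressive integrand coincides with the corresponding unconditional integral, since $B$ (resp.\ $\hat B$) remains a Brownian motion under the enlargement and the quadratic variation is unchanged; and $L^x_\cdot(B^j)$ is a measurable functional of $B^j$ alone, hence unaffected by the conditioning. Integrating the conditional identity against the law of $\mathcal{Y}$ yields the claimed equality $\mathbb{P}$-a.s.

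The main obstacle I anticipate is the measurability bookkeeping in the last step: Eisenbaum's identity holds, for each frozen path $w$, outside a $\mathbb{P}(\cdot\mid\mathcal{Y}=w)$-null set that a priori depends on $w$, and these must be assembled into a single $\mathbb{P}$-null set. I would handle this by first establishing the identity for functionals $F$ of product form, or depending on finitely many time coordinates of the path argument, where the joint measurability in $(w,\omega_B)$ and the dependence on $w$ of the approximating Riemann sums defining the local-time integral are transparent, and then passing to general boundedness-preserving $F$ by a monotone-class and dominated-convergence argument, using the localization of the second step to control the approximating integrals uniformly. A secondary point needing care is verifying that enlargement by the independent $\mathcal{Y}$ neither inflates the quadratic variation of $\hat B$ nor destroys its backward-martingale structure, which again follows from independence.
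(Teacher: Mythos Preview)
Your proposal is correct and essentially matches the paper's argument. The paper also reduces to Eisenbaum's deterministic identity and uses the independence of $Y$ and $B$; it simply presents the simple-functional step up front (defining the local-time integral for $F_\Delta$, invoking \cite[Theorem~2.1]{EISENBAUM} there, and then extending by density in the $\|\cdot\|_L$ norm), whereas you phrase the same reduction through conditioning and filtration enlargement and only arrive at the simple-functional approximation when resolving your ``main obstacle''---but the content is the same.
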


As in \cite{Eisenbaum09}, the integral with respect to the Brownian local time $L^x_s(B)$ is first defined for simple functionals, i.e., for those functionals $F_\Delta$ for which there exists a partition $\Delta := \{s_1 < s_2 < ... < s_n\}\times\{x_1 < x_2 <...< x_m\}$ such that  
\begin{align}
    F_\Delta(t, x_{\wedge t}, x) &= \sum_{(s_i, x_j) \in \Delta} F_{i,j}(w_{\wedge s_i})\mathbbm{1}_{(s_i, s_{i+1}]}(t)\mathbbm{1}_{(x_j, x_{j+1}]}(x).\nonumber
\end{align}
For these functionals, the integral is equal to,

 \begin{align}\label{int_simple}
 \int_0^t \int_{\mathbb{R}} F_{\Delta}(t,w_{\wedge t}, x)\mathrm{d}L^x_t(B) &= \sum_{(s_i,x_j)\in \Delta} (F_{i,j}(w_{\wedge s_i})(L^{x_{j+1}}_{s_{i+1}\wedge t}(B)-L^{x_{j+1}}_{s_i\wedge t}(B)-L^{x_{j}}_{s_{i+1}\wedge t}(B)+L^{x_{j}}_{s_i\wedge t}(B)))\nonumber\\
 &=\int_0^tF_{\Delta}(s,w_{\wedge s},B(s))\mathrm{d}B(s)+\int_{T-t}^T F_\Delta(T-s,w_{\wedge (T-s)}, \hat{B}(s))\mathrm{d}\hat{B}(s).\end{align}

The second equality in \eqref{int_simple} follows from the corresponding result in \cite[Theorem 2.1]{EISENBAUM}. Then, Theorem \ref{Integral with respect to local time} and its proof follow from the corresponding deterministic integrand results in \cite[Theorem 2.1]{Eisenbaum09}, the fact that $(Y(t))_{t \in [0,T]}$ and $(B(t))_{t \in [0,T]}$ are independent, and \cite[Chapter IV. Theorem 65]{Protter}. Moreover, for boundedness-preserving functionals $F$, the integral with respect to local time satisfies, for $j \in \{1,2,...,d\}$, 

\begin{align}
    \mathbb{E}\left|\int_0^t\int_{\mathbb{R}}F(s,Y_{\wedge s^-},B(s)|_{B^j(s)=x})\mathrm{d}L_s^x(B^j)\right| \leq  \|F\|_L,\nonumber
\end{align}
where for $F:[0,T]\times \cadlag\times \mathbb{R}^d\to \mathbb{R}^d$ such that for all $x\in\mathbb{R}^d$, $F(\cdot,x)$ is boundedness preserving, and for all $(s,w)\in [0,T]\times \cadlag$, $F(s,w,\cdot)$ is locally bounded, and Borel measurable, 
    \begin{align}\label{local_time_norm_eq}
    \|F\|_L &:= 2\left(\mathbb{E}\left(\int_0^T F^2(t,Y_{\wedge t}, B(t))\mathrm{d}t\right)\right)^{1/2}+\mathbb{E}\left(\int_0^T |F(t,Y_{\wedge t}, B(t))|\frac{\|B(t)\|_1}{t}\mathrm{d}t\right), 
\end{align}
    \noindent
where, $\|\cdot\|_1$ denotes the Euclidean $\ell_1$-norm.
Using the local time norm $\|\cdot\|_L$ defined above, the integral can 
then be extended to $\mathscr{C}^{0,0}([0,T],\mathbb{R}^d)$ through density arguments.

Next, and from its validity for deterministic integrands  \cite{EISENBAUM}, for boundedness-preserving functionals $F:[0,T]\times \cadlag\times \mathbb{R}^d \to \mathbb{R}$ with a gradient of weak derivatives $\nabla F = (\partial_1 F,..., \partial_d F)$, the identity
    \begin{align}
        \int_0^T\int_{\mathbb{R}} F(t,Y_{\wedge t^-}, B(t)|_{B^j(t) = x})\mathrm{d}L^x_t(B^j) = -\int_0^T \partial_j F(t,Y_{\wedge t^-},B(t))\mathrm{d}t,\nonumber
    \end{align}
    \noindent is satisfied for all $j \in \{1,2,...,d\}$.

The main setting in which Theorem~\ref{Integral with respect to local time} is applied is that of a L\'{e}vy process $(X(t))_{t\in [0,T]}$, with characteristic triplet $(\mu,\Sigma,\nu)$. Then, classically, $(X(t))_{t\in [0,T]}$ has a L\'{e}vy-It\^{o} decomposition, and so for all $t\in [0,T]$, $X(t) = \mu t+ \Sigma^{1/2}B(t)+N(t)$,
where $(B(t))_{t \in [0, T]}$ and $(N(t))_{t \in [0,T]}$ are respectively a multidimensional Brownian motion and a pure jump L\'{e}vy process with characteristic triplet $(0,0,\nu)$ and compensated measure $\tilde{N}$, both progressively adapted to the filtration generated by $(X(t))_{t\in [0,T]}$.

The two main choices for the process $(Y(t))_{t \in [0,T]}$ in Theorem~\ref{Integral with respect to local time} are $Y(t) = N(t)$, when proving the optimal It\^{o} formula for multivariate L\'{e}vy processes, or $Y(t) = Z_{\wedge s}(t)+N(t)$ for some $s \in [0,t]$ 
where $(Z(t))_{t \in [0,T]}$ is a progressively measurable process adapted to $(\mathcal{F}_t)_{t \in [0, T]}$. In this last instance, using the Brownian motion $B_s(t) := B(t)-B(s)$ for $t \in [s,T]$, gives the following version of Theorem~\ref{Integral with respect to local time}.

\begin{Theorem}\label{Displaced_Integral}
    Let $F:[0,T]\times \cadlag \times \mathbb{R}^{2d} \to \mathbb{R}$ be such that for all $(t,w) \in [0,T]\times \cadlag$, $F(t,w_{\wedge t},\cdot,\cdot)$ is locally bounded. Let $(B(t))_{t \in [0,T]}$ be a standard $d$-dimensional Brownian motion adapted to the filtration $(\mathcal{F}_t)_{t \in [0, T]}$, let $(B_s(t)=B(t)-B(s))_{t \in [s,T]}$, and let $(Z(t))_{t \in [0,T]}$ a progressively measurable process adapted to $(\mathcal{F}_t)_{t \in [0, T]}$. Then, the integral with respect to the Brownian motion's local time measure $dL^x_s(B_s)$ satisfies for all $j=1, \dots, d$:
\begin{align}
    \int_s^t\int_{\mathbb{R}}F(s, Z_{\wedge s},N(r), B_s(r)|_{B^j_s(r) = x})\,\mathrm{d}L^x_r(B^j_s)= \int_s^t\int_{\mathbb{R}}F(s, Z_{\wedge s},N(r), B_s(r)|_{B^j_s(r) = x-B(s)})\,\mathrm{d}L^x_r(B^j)\nonumber&\\
    =\int_s^t F(s,Z_{\wedge s},N(r),B_s(r))\,\mathrm{d}B^j(r)+\int_{T-t}^{T-s} F(s,Z_{\wedge{s}},N(T-r),\hat{B}_s(r))\,\mathrm{d}\hat{B}^j(r)&.\nonumber
\end{align}
\end{Theorem}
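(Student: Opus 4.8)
The plan is to obtain Theorem~\ref{Displaced_Integral} as a corollary of Theorem~\ref{Integral with respect to local time}, taking the Brownian motion there to be the increment process $B_s$ and the independent process $Y$ to be the jump part $N$, while carrying $Z_{\wedge s}$ along as a frozen $\mathcal F_s$-measurable path. First I would record two facts that make this legitimate. On one hand, for $r\in[s,T]$ the process $B_s(r)=B(r)-B(s)$ is, after the time-change $r\mapsto r-s$, a standard $d$-dimensional Brownian motion on $[0,T-s]$ whose generated $\sigma$-field is independent of $\mathcal F_s$. On the other hand, by the independence of the Brownian and jump components in the L\'evy--It\^o decomposition $X(t)=\mu t+\Sigma^{1/2}B(t)+N(t)$, the $\sigma$-field $\sigma\{B(r):r\in[0,T]\}$ is independent of $\sigma\{N(r):r\in[0,T]\}$; combined with the previous point this gives that $\sigma\{B_s(r):r\in[s,T]\}$ is independent of $\sigma\big(\mathcal F_s\cup\sigma\{N(r):r\in[0,T]\}\big)$, which contains both $\sigma\{Z_{\wedge s}\}$ and $\sigma\{N\}$. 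Hence the independence hypothesis of Theorem~\ref{Integral with respect to local time} holds once $B$ is replaced by $B_s$ and $Y$ by $N$.

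For the first equality I would invoke the spatial-translation property of semimartingale local time: since $B^j_s=B^j-B^j(s)$ with $B^j(s)$ an $\mathcal F_s$-measurable constant, the occupation-times formula (equivalently, Tanaka's formula) yields $\mathrm dL^x_r(B^j_s)=\mathrm dL^{x+B^j(s)}_r(B^j)$ as measures in $r$ on $[s,T]$. Substituting this into the inner integral and changing variables $x\mapsto x-B^j(s)$ (the symbol ``$B(s)$'' in the statement abbreviating its $j$-th coordinate) turns $\int_{\mathbb R}F(s,Z_{\wedge s},N(r),B_s(r)|_{B^j_s(r)=x})\,\mathrm dL^x_r(B^j_s)$ into $\int_{\mathbb R}F(s,Z_{\wedge s},N(r),B_s(r)|_{B^j_s(r)=x-B^j(s)})\,\mathrm dL^x_r(B^j)$; integrating in $r$ over $[s,t]$ gives the first equality.

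For the second equality I would apply Theorem~\ref{Integral with respect to local time} on the interval $[0,T-s]$, with the Brownian motion $u\mapsto B_s(s+u)$, the c\`adl\`ag process $u\mapsto N(s+u)$ in the role of $Y$, and the non-anticipative functional $(u,y_{\wedge u^-},b)\mapsto F\big(s,Z_{\wedge s},y(u^-),b\big)$, evaluated at $u=t-s$. Because the first time-slot of $F$ is frozen at $s$, $Z_{\wedge s}$ is a fixed path, and $r$ runs over the bounded interval $[s,t]$, mere local boundedness of $F(s,Z_{\wedge s},\cdot,\cdot)$ is enough for the simple-functional definition~\eqref{int_simple} and for the density argument that extends the integral. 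Undoing the time-change returns the forward integral $\int_s^tF(s,Z_{\wedge s},N(r),B_s(r))\,\mathrm dB^j(r)$ together with a time-reversed integral over $[T-t,T-s]$, and finally, since $\hat B_s(r)=B_s((T-r)^-)=\hat B(r)-B(s)$ differs from $\hat B(r)$ only by the $\mathcal F_s$-measurable constant $B(s)$, one has $\mathrm d\hat B^j_s(r)=\mathrm d\hat B^j(r)$, which yields exactly the last term in the statement.

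The part I expect to demand the most care is bookkeeping rather than ideas: Theorem~\ref{Integral with respect to local time} is stated on $[0,T]$ for a Brownian motion issued from the origin and for a genuinely independent pair $(Y,B)$, whereas here the clock starts at $s$, the driving process is the increment $B_s$, and $Z_{\wedge s}$ is itself random (only $\mathcal F_s$-measurable). To make the reduction rigorous I would either condition on $\mathcal F_s$ and argue through a regular conditional distribution---under which $Z_{\wedge s}$ becomes deterministic and $(N,B_s)$ an independent pair, so that the identity obtained conditionally lifts to a $\mathbb P$-almost-sure identity---or, following the construction used elsewhere in the paper, re-derive \eqref{int_simple} and the attendant $\|\cdot\|_L$-type estimate directly on the subinterval $[s,T]$ for simple functionals and then pass to the limit. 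Either route is routine, but it requires tracking null sets and joint measurability in $\omega$ with some attention; matching $N(r)$ against the left-limit $N(r^-)$ that appears in Theorem~\ref{Integral with respect to local time} falls into the same category, since the two agree off the countable jump set and hence $\mathrm dL^x_r$-almost everywhere.
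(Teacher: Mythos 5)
Your proposal is correct and follows essentially the same route as the paper, which obtains Theorem~\ref{Displaced_Integral} directly from Theorem~\ref{Integral with respect to local time} applied on $[s,T]$ with the displaced Brownian motion $B_s$, together with the translation property of local time and the fact that $\hat B_s$ and $\hat B$ differ by an $\mathcal F_s$-measurable constant. The only cosmetic difference is that the paper absorbs the frozen path into the independent process by taking $Y(t)=Z_{\wedge s}(t)+N(t)$, whereas you keep $Z_{\wedge s}$ fixed by conditioning on $\mathcal F_s$ (or by redoing the simple-functional construction on $[s,T]$); both implementations rest on the same independence of $\sigma\{B_s(r):r\in[s,T]\}$ from $\mathcal F_s\vee\sigma\{N\}$.
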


Since $(N(t^-))_{t \in [s,T]}$ is also independent of $\sigma\{B(t): t \in [s,T]\}$, and since it only has a countable number of jumps, an approximation in the local time norm, allows to see that,
\begin{align}
     \int_{T-t}^{T-s} F(s, Z_{\wedge s},N(T-r),\hat{B}(r))\mathrm{d}\hat{B}(r) = \int_{T-t}^{T-s} F(s, Z_{\wedge s},\hat{N}(r),\hat{B}(r))\mathrm{d}\hat{B}(r).\nonumber
\end{align}
\noindent Therefore, Theorem~\ref{Displaced_Integral} can be written in a way more akin to the previous results in the literature, in particular, \cite[Theorem 5]{EisenbaumParabolic}. Having defined the notion of integration with respect to local time that is used in the rest of the paper, we first proceed to prove a version of \cite[Theorem 1.1]{Eisenbaum09} for functions of multivariate L\'{e}vy process with possibly singular covariance matrix $\Sigma$. However, the main proof idea in \cite{Eisenbaum09} is to use the Brownian motion's local time to rewrite the function to allow additional regularity when considering the effect of jumps. 

Now, for $i\geq 1$, let $C^i(\mathbb{R}^d,\mathbb{R})$ denote the set of $i$-times weakly differentiable functions from $\mathbb{R}^d$ to $\mathbb{R}$ such that the function and all its derivatives are locally bounded, while $C^0$ is the set of locally bounded functions. Then, define the following operators.

\begin{definition}
For $i \in \{1,...,d\}$, let ${\cal I}_i: C^0(\mathbb{R}^d,\mathbb{R}) \to C^0(\mathbb{R}^d,\mathbb{R})$, and let ${\cal A}_i: C^2(\mathbb{R}^d,\mathbb{R})\to C^0(\mathbb{R}^d,\mathbb{R})$, be defined via ${\cal I}_i := \int_0^{x^i} f(x|_{x^i = y})\,\mathrm{d}y$ and ${\cal A}_i :=  \frac{1}{2}\frac{\partial^2f}{\partial x_i^2} (x)+\int_{\| y\|_2 < 1}\int_0^1\left(\frac{\partial f}{\partial x_i}(x+uRy)-\frac{\partial f}{\partial x_i}(x)\right)(Ry)^i\mathrm{d}u\nu(\mathrm{d}y)$,\\
    %\item $\mathcal{L}_tF(B_{\wedge t}) := \sum_{i = 1}^m \int_0^t\int_{\mathbb{R}} F_i(B(s)|_{B^i(s) = x})\,\mathrm{d}L^x_s(B^i)$, 
\noindent where $R$ a $d \times m$, $m \ge 1$ matrix.  
\end{definition}

\begin{Theorem}\label{multivariate_local_time}
Let $(X(t))_{t \in [0,T]}$ be a multivariate L\'{e}vy Process with triplet $(\mu,\Sigma,\nu)$, let $Q: \mathbb{R}^d\to \mathbb{R}^d$ be the orthogonal projection onto the range of $\Sigma^{1/2}$, let $f:\mathbb{R}^d \to \mathbb{R}$ be continuously differentiable, let $\tilde{f}:\mathbb{R}\times\mathbb{R}^m\to \mathbb{R}$ be such that $\tilde{f}(t,x) = f(\Sigma^{1/2}x+X^d(t^-))$, where $m$ is the rank of $\Sigma^{1/2}$, and let 
\begin{align}
    \int_{\| x\|_2 < 1} \| (I_d-Q) x\|_2\,\nu(\mathrm{d}x) < \infty,\nonumber
\end{align} 
\noindent where $\|\cdot\|_2$ is the Euclidean norm.
\noindent Then,
\begin{align}\label{Local_Time_Ito}
   f(X(t))-f(0) &= \int_0^t \nabla f(X(s^-))^T \Sigma^{1/2}\cdot\mathrm{d}B(s)+\int_0^t \langle \nabla f(X(s^-)),\mu\rangle\,\mathrm{d}s\nonumber\\
   &+\int_0^t\int_{\| y\|_2 < 1} \!\!\!\!\!\!\!\!\!\!\left(f(X(s^-)+y)-f(X(s^-))\right)\,\tilde{N}(\mathrm{d}s,\mathrm{d}y)\nonumber\\
    &-\sum_{i = 1}^m \int_0^t \int_{\mathbb{R}} \mathcal{A}_i\mathcal{I}_i\tilde{f}(s,B(s)|_{B^i(s) = x})\,\mathrm{d}L^x_s(B^i)+\sum_{s \in [0,t]} \left(f(X(t))-f(X(t^-))\right)\mathbbm{1}_{\{\|\Delta X(s)\|_2 \geq 1\}}\nonumber\\
    &+\int_0^t\int_{\| y\|_2 < 1}\!\!\!\!\!\!\!\!\!\!\left(f(X(s^-)+y)-f(X(s^-)+Qy)-\left\langle\nabla f(X(s^-)),(I_d-Q)y\right\rangle\right)\,\nu(\mathrm{d}y)\mathrm{d}s,
\end{align}
\noindent where the matrix $R$ in $\mathcal{A}_i$'s definition is chosen as the left-inverse of the matrix associated to $Q$, i.e. $R:= ((\Sigma^{1/2})^T \Sigma^{1/2})^{-1}(\Sigma^{1/2})^T$.
\end{Theorem}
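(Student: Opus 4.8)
The plan is to first prove \eqref{Local_Time_Ito} for $f\in C^2(\mathbb{R}^d,\mathbb{R})$ by rewriting the classical It\^o formula, and then to recover the general $C^1$ case by mollification, the passage to the limit being controlled by the local time norm $\|\cdot\|_L$ of \eqref{local_time_norm_eq} and by the standing hypothesis $\int_{\|x\|_2<1}\|(I_d-Q)x\|_2\,\nu(\mathrm{d}x)<\infty$. As a preliminary reduction, an interlacing argument removes the finitely many jumps of $X$ of size $\ge 1$, reducing matters to a L\'evy measure supported on $\{\|y\|_2<1\}$, the discarded jumps contributing exactly $\sum_{s\in[0,t]}(f(X(s))-f(X(s^-)))\mathbbm{1}_{\{\|\Delta X(s)\|_2\ge1\}}$; and localizing along $\tau_R:=\inf\{s:\|X(s)\|_2\vee\|X(s^-)\|_2\ge R\}$ lets me assume the arguments of $f$, $\nabla f$ and (after smoothing) $\nabla^2 f$ stay in a fixed compact set, so all processes below are bounded and all integrals integrable.

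For $f\in C^2$, write $X=\Sigma^{1/2}B+X^d$ with $X^d:=X-\Sigma^{1/2}B$, so that $\tilde f(s,B(s))=f(X(s^-))$, and apply the classical It\^o formula for semimartingales with jumps to $f(X(t))$: this produces the gradient integral $\int_0^t\nabla f(X(s^-))^T\Sigma^{1/2}\cdot\mathrm{d}B(s)$, the drift $\int_0^t\langle\nabla f(X(s^-)),\mu\rangle\,\mathrm{d}s$, the compensated sum $\int_0^t\int_{\|y\|_2<1}(f(X(s^-)+y)-f(X(s^-)))\,\tilde N(\mathrm{d}s,\mathrm{d}y)$, its compensator $\int_0^t\int_{\|y\|_2<1}(f(X(s^-)+y)-f(X(s^-))-\langle\nabla f(X(s^-)),y\rangle)\,\nu(\mathrm{d}y)\mathrm{d}s$, and the bracket term $\tfrac12\int_0^t\mathrm{Tr}(\Sigma\nabla^2 f(X(s^-)))\,\mathrm{d}s$. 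It remains to match the last three with the $\mathcal A_i\mathcal I_i$ and $(I_d-Q)$ terms of \eqref{Local_Time_Ito}. From $\partial_i\mathcal I_i\tilde f=\tilde f$ one gets $\mathcal A_i\mathcal I_i\tilde f=\tfrac12\partial_i\tilde f+\int_{\|y\|_2<1}\int_0^1(\tilde f(\cdot+uRy)-\tilde f(\cdot))(Ry)^i\,\mathrm{d}u\,\nu(\mathrm{d}y)$ and $\partial_i(\mathcal A_i\mathcal I_i\tilde f)=\mathcal A_i\tilde f$; the deterministic-integrand identity $\int_0^t\int_{\mathbb{R}}F(s,N_{\wedge s^-},B(s)|_{B^i(s)=x})\,\mathrm{d}L^x_s(B^i)=-\int_0^t\partial_i F(s,N_{\wedge s^-},B(s))\,\mathrm{d}s$, legitimate here with $Y=N$ since $N$ and $B$ are independent, then gives
\begin{align*}
-\sum_{i=1}^m\int_0^t\!\int_{\mathbb{R}}\mathcal A_i\mathcal I_i\tilde f(s,B(s)|_{B^i(s)=x})\,\mathrm{d}L^x_s(B^i)=\sum_{i=1}^m\int_0^t\mathcal A_i\tilde f(s,B(s))\,\mathrm{d}s .
\end{align*}
With $A:=\Sigma^{1/2}$, $R=(A^TA)^{-1}A^T$ and $AR=Q$, the chain rule yields $\sum_{i=1}^m\partial_{ii}\tilde f(s,B(s))=\mathrm{Tr}(\Sigma\nabla^2 f(X(s^-)))$ and $\sum_{i=1}^m(\partial_i\tilde f(s,B(s)+uRy)-\partial_i\tilde f(s,B(s)))(Ry)^i=\langle\nabla f(X(s^-)+uQy)-\nabla f(X(s^-)),Qy\rangle$, whose $u$-integral is $f(X(s^-)+Qy)-f(X(s^-))-\langle\nabla f(X(s^-)),Qy\rangle$; adding the $(I_d-Q)$-integrand of \eqref{Local_Time_Ito} and using $Qy+(I_d-Q)y=y$ collapses the sum of these two terms to the It\^o compensator plus $\tfrac12\int_0^t\mathrm{Tr}(\Sigma\nabla^2 f(X(s^-)))\,\mathrm{d}s$, proving \eqref{Local_Time_Ito} for $f\in C^2$.

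For general $f\in C^1$, put $f_n:=f*\rho_n\in C^\infty$, so that $f_n\to f$ and $\nabla f_n\to\nabla f$ locally uniformly, write \eqref{Local_Time_Ito} for $f_n$, and pass to the limit termwise on $[0,t\wedge\tau_R]$. The two stochastic integrals converge in $L^2$ by It\^o's isometry together with $\int_{\|y\|_2<1}\|y\|_2^2\,\nu(\mathrm{d}y)<\infty$; the drift and the finite large-jump sum converge by dominated convergence; for the local time term I use the bound $\mathbb{E}|\int_0^t\int_{\mathbb{R}}F\,\mathrm{d}L^x_s(B^i)|\le\|F\|_L$ from Theorem~\ref{Integral with respect to local time} together with $\mathcal A_i\mathcal I_i\tilde f_n\to\mathcal A_i\mathcal I_i\tilde f$ locally uniformly (which follows from $\partial_i\tilde f_n\to\partial_i\tilde f$ and $|\tilde f_n(\cdot+uRy)-\tilde f_n(\cdot)-(\tilde f(\cdot+uRy)-\tilde f(\cdot))|\le Cu\|y\|_2\sup|\nabla(f_n-f)|$, with $\int_{\|y\|_2<1}\|y\|_2|(Ry)^i|\,\nu(\mathrm{d}y)<\infty$) to deduce $\|\mathcal A_i\mathcal I_i\tilde f_n-\mathcal A_i\mathcal I_i\tilde f\|_L\to0$; and the $(I_d-Q)$-integral converges by dominated convergence after writing $f_n(x+y)-f_n(x+Qy)-\langle\nabla f_n(x),(I_d-Q)y\rangle=\int_0^1\langle\nabla f_n(x+Qy+u(I_d-Q)y)-\nabla f_n(x),(I_d-Q)y\rangle\,\mathrm{d}u$, which is bounded by $2\sup|\nabla f_n|\,\|(I_d-Q)y\|_2$ and hence $\nu$-integrable precisely by the standing hypothesis. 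Undoing the localization ($R\to\infty$) and the interlacing finishes the proof.

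I expect the crux to be the local time term. Making sense of $\mathcal A_i\mathcal I_i\tilde f$ when $f$ is merely $C^1$ is exactly why one antidifferentiates first: $\mathcal I_i\tilde f$ is twice differentiable in the $i$-th coordinate even though $\tilde f$ is not, so $\mathcal A_i$ can be applied, and its continuity in the $\|\cdot\|_L$-norm is what lets the local time integral survive the $C^2\to C^1$ limit. The complementary, more conceptual point is that the splitting $y=Qy+(I_d-Q)y$ isolates the jump directions that are ``seen'' by the Brownian local time — absorbed into $\mathcal A_i\mathcal I_i$ — from those orthogonal to the Gaussian part, where no regularization is available and one must instead pay with the extra integrability of $\nu$ against $\|(I_d-Q)\cdot\|_2$; getting the bookkeeping of these two pieces to match the classical It\^o bracket and compensator is the main computational hurdle.
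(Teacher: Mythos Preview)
Your proposal is correct and follows essentially the same route the paper indicates: the paper does not spell out a proof but states that it ``follows the steps outlined in \cite{Eisenbaum09}, modified to account for the case when $\Sigma$ is not of full rank,'' and this is precisely what you do---establish \eqref{Local_Time_Ito} for $f\in C^2$ by matching the classical It\^o compensator and bracket with $\sum_i\mathcal A_i\tilde f$ via the local-time/derivative identity, then mollify and pass to the limit in $\|\cdot\|_L$, using the splitting $y=Qy+(I_d-Q)y$ and the hypothesis $\int_{\|y\|_2<1}\|(I_d-Q)y\|_2\,\nu(\mathrm{d}y)<\infty$ to handle the degenerate directions. The localization/interlacing reductions and the bookkeeping you outline are exactly the modifications needed for singular $\Sigma$.
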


The proof of Theorem \ref{multivariate_local_time} follows the steps outlined in \cite{Eisenbaum09}, modified to account for the case when $\Sigma$ is not of full rank. The interested reader can find the details in \cite{thesis}.

The next theorem is the main result of the paper providing a functional analogue of the previous one (and in fact is equivalent to it).

\begin{Theorem}\label{Optimal_Functional}

Let $F \in \mathscr{C}^{1,1}([0,T],\mathbb{R}^d)$, and let $ X = (X(t))_{t \in [0,T]}$ be a L\'{e}vy process with triplet $(\mu,\Sigma,\nu)$. Let, $Q$, the orthogonal projection onto the range of $\Sigma^{1/2}$, be such that $\int_{\| y\|_2 < 1}\| (I_d-Q)y\|_2\,\nu(\mathrm{d}y) < \infty$. Then,
\begin{align}
\label{Optimal_Ito_Formula}
F(t,X_{\wedge t})-F(0,X_{\wedge 0}) &= \int_0^t DF(s,X_{\wedge s})\,\mathrm{d}s+\int_0^t \langle \nabla F(s,X_{\wedge s}),\mu\rangle\,\mathrm{d}s\nonumber\\
&+\int_0^t \nabla F(s,X_{\wedge s^-})^T\Sigma^{1/2}\,\mathrm{d}B(s)+ \sum_{s \leq t} (F(s,X_{\wedge s})-F(s,X_{\wedge s^-}))\mathbbm{1}_{\{\|\Delta X(s)\|_2 \geq 1\}}\nonumber\\
&+\int_0^t \int_{\|y\|_2 < 1} \left(F(s,X_{\wedge s^-}^y)-F(s,X_{\wedge s^-})\right)\,\tilde{N}(\mathrm{d}s,\mathrm{d}y)\nonumber\\
&-\sum_{j = 1}^m \int_0^t\int_{\mathbb{R}}\mathcal{A}_j\mathcal{I}_jF(s,X_{\wedge s^-}^{(\Sigma^{1/2}e_j) x-X(s^-)})\,\mathrm{d}L_s^x(B^j)\nonumber\\
&+\int_0^t\int_{\| y\|_2 < 1} \left(F(s,X_{\wedge s^-}^y)-F(s,X_{\wedge s^-}^{Qy})-\langle \nabla F(t,X_{\wedge t^-}),(I_d-Q)y\rangle\right)\,\nu(\mathrm{d}y)\,\mathrm{d}s,\nonumber\\&
\end{align}
\noindent with $\mathcal{A}_i$ defined as in Theorem \ref{multivariate_local_time}.
\end{Theorem}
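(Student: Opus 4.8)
The plan is to deduce the functional identity \eqref{Optimal_Ito_Formula} from the finite-dimensional local time-space formula of Theorem~\ref{multivariate_local_time} by a past-freezing and discretization argument in the spirit of \cite{Cont13}, carrying the Brownian local-time term along at every step. The converse implication is the easy half of the claimed equivalence: taking $F(t,x_{\wedge t}):=f(x(t))$ for $f\in C^1(\mathbb{R}^d,\mathbb{R})$ yields a functional in $\mathscr{C}^{1,1}([0,T],\mathbb{R}^d)$ with $DF\equiv 0$ and $\nabla F(s,x_{\wedge s})=\nabla f(x(s))$, and identifying $F(s,X_{\wedge s^-}^{(\Sigma^{1/2}e_j)x-X(s^-)})$ with the substituted function $\tilde f$ of Theorem~\ref{multivariate_local_time} collapses \eqref{Optimal_Ito_Formula} onto \eqref{Local_Time_Ito}. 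For the substantive direction I would argue as follows. (A mollification alternative is also available: convolve $F$ in its terminal value to obtain $F_\varepsilon\in\mathscr{C}^{1,2}$, apply a jump-extended $\mathscr{C}^{1,2}$ functional It\^o formula, rewrite its $\tfrac12\,\Sigma:\!\int\nabla^2F_\varepsilon\,\mathrm ds$ term through Brownian local time via the identities recorded after Theorem~\ref{Integral with respect to local time}, and pass $\varepsilon\to0$ using $\|\cdot\|_L$-continuity; but the discretization route below is closer to the structure of Theorem~\ref{multivariate_local_time}.)

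\textbf{Discretization and freezing.} Use the L\'evy--It\^o decomposition $X(t)=\mu t+\Sigma^{1/2}B(t)+N(t)$, with $B$ a standard Brownian motion and $N$ an independent pure-jump L\'evy process, and split $N$ into its compensated small jumps and its a.s.\ finitely many jumps of size $\ge 1$ on $[0,t]$. Fix a partition $0=t_0<\dots<t_n=t$ which, with probability arbitrarily close to one, places each large jump at a right endpoint, and telescope $F(t,X_{\wedge t})-F(0,X_{\wedge 0})=\sum_i\big(F(t_{i+1},X_{\wedge t_{i+1}})-F(t_i,X_{\wedge t_i})\big)$. Split each summand into a horizontal increment $F(t_{i+1},X_{\wedge t_i})-F(t_i,X_{\wedge t_i})$ and a vertical increment $F(t_{i+1},X_{\wedge t_{i+1}})-F(t_{i+1},X_{\wedge t_i})$. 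By the definition of $DF$ and its fixed-time continuity, the horizontal increment equals $\int_{t_i}^{t_{i+1}}DF(u,X_{\wedge t_i})\,\mathrm du$, and the sum converges, as the mesh vanishes, to $\int_0^t DF(s,X_{\wedge s})\,\mathrm ds$, using fixed-time continuity of $DF$, the convergence $X^n_{\wedge s}\to X_{\wedge s}$ in $d_D$ of the frozen approximations, boundedness-preservation of $DF$, and the a.s.\ compactness of the range of $X$ on $[0,t]$ for domination.

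\textbf{Vertical increments via Theorem~\ref{multivariate_local_time}.} On an interval containing a large jump at $s=t_{i+1}$, the vertical increment is, up to an error controlled by fixed-time continuity of $F$ and the smallness of the non-jump oscillation of $X$, equal to $F(s,X_{\wedge s})-F(s,X_{\wedge s^-})$, producing the large-jump sum. On each remaining interval, condition on $\sigma\{N(r):r\in[0,T]\}$ and on $\mathcal F_{t_i}$; up to the discrepancy between $X_{\wedge t_{i+1}}$ and the path frozen at $X(t_i)$ on $[t_i,t_{i+1})$, the vertical increment becomes the increment over $[t_i,t_{i+1}]$, evaluated along $X$, of the finite-dimensional function $z\mapsto F\big(t_{i+1},(X_{\wedge t_i})^{z-X(t_i)}\big)$, whose spatial gradient at $X(t_i)$ coincides with $\nabla F(t_i,X_{\wedge t_i})$ by the definition of the space derivative. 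Applying the increment form of Theorem~\ref{multivariate_local_time} to this function (note $\mathcal{I}_j$ raises the order in direction $j$, so $\mathcal{A}_j\mathcal{I}_jF$ only needs $F\in C^1$) yields, on each such interval, a Brownian contribution $\int\nabla F^T\Sigma^{1/2}\,\mathrm dB$, a drift contribution $\int\langle\nabla F,\mu\rangle\,\mathrm ds$, the compensated small-jump contribution $\int\!\int_{\|y\|_2<1}(F(s,X_{\wedge s^-}^y)-F(s,X_{\wedge s^-}))\,\tilde N(\mathrm ds,\mathrm dy)$ with its compensator correction involving $Qy$ and $(I_d-Q)y$, and the local-time contribution $-\sum_{j=1}^m\int\!\int_{\mathbb R}\mathcal A_j\mathcal I_jF(s,X_{\wedge s^-}^{(\Sigma^{1/2}e_j)x-X(s^-)})\,\mathrm dL^x_s(B^j)$. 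The projection $Q$ and the hypothesis $\int_{\|y\|_2<1}\|(I_d-Q)y\|_2\,\nu(\mathrm dy)<\infty$ enter exactly as in Theorem~\ref{multivariate_local_time}: the local-time rewriting regularizes only in the directions spanning the range of $\Sigma^{1/2}$, so small-jump components orthogonal to that range must be expanded to first order, the integrability hypothesis guaranteeing convergence of the resulting $\nu\otimes\mathrm ds$-term. Finally, assemble the three groups and let the mesh tend to zero: the $\mathrm ds$- and $\tilde N$-limits follow from continuity of $F,\nabla F$, boundedness-preservation, It\^o's isometry and dominated convergence together with $d_D$-convergence of the frozen approximations; the Brownian-integral limit from the same convergence and $L^2$-continuity of the It\^o integral; and the local-time limit from continuity of the integral with respect to local time in the norm $\|\cdot\|_L$ of \eqref{local_time_norm_eq}, applied to the approximating integrands $\mathcal A_j\mathcal I_jF$ restricted to the partition.

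\textbf{Main obstacle.} I expect the principal difficulty to lie in making the past-freezing rigorous, namely showing that the sum over the partition of the discrepancies between $F(t_{i+1},X_{\wedge t_{i+1}})$ and $F$ evaluated on the path frozen at $X(t_i)$ genuinely vanishes: this is precisely where only $\mathscr{C}^{1,1}$ regularity is available, so there is no second-order Taylor remainder to absorb it into, and one must instead exploit the local-time representation itself together with left-continuity and fixed-time continuity of $F$ and $\nabla F$, uniformly over the a.s.\ compact range of $X$ and with the modulus of continuity of $X$. A secondary obstacle is keeping the singular-$\Sigma$ bookkeeping, i.e.\ the split of small-jump directions into the range of $\Sigma^{1/2}$ and its orthogonal complement via $Q$, consistent between the finite-dimensional applications of Theorem~\ref{multivariate_local_time} on the subintervals and the functional limit, while controlling the local-time term uniformly in the $\|\cdot\|_L$-norm along the discretization.
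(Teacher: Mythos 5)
Your proposal follows essentially the same route as the paper's proof: a jump-adapted partition in the spirit of Cont--Fourni\'e, telescoping into horizontal and vertical increments, freezing the past to apply Theorem~\ref{multivariate_local_time} to the finite-dimensional functions $z\mapsto F(t_{i+1},(X_{\wedge t_i})^{z-X(t_i)})$ on each subinterval, and passing to the limit term by term using left-continuity, boundedness-preservation, BDG/It\^o isometry, the bound $\|\cdot\|_L\leq C\|\cdot\|_\infty$ for the local-time term, and a final localization. The obstacle you flag (controlling the past-freezing error with only $\mathscr{C}^{1,1}$ regularity) is exactly what the paper resolves by taking the stopping-time partition that captures all jumps of size at least $2^{-n}$, which makes $X^n\to X(\cdot^-)$ uniform off the jump times and lets Dupire's uniform-continuity lemma carry $F$, $\nabla F$ and the $\mathcal{A}_j\mathcal{I}_j$ integrands along in the sup and $\|\cdot\|_L$ norms.
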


\begin{proof}
Let $\tau = \{\tau_n\}_{n \geq 1}$ be a sequence of partitions given by stopping times $\tau_n = (t_0^n,...,t^n_{k_n})$, as in the proof of the functional It\^{o} formula in \cite{CONT20101043}. More precisely, define $t^n_0 = 0$, and 
\begin{align}
    &t^n_{i+1} = \inf\{t > t^n_i: \|\Delta X(t)\|_2 \geq 1/2^n\}\wedge (t^n_i+1/2^n)\wedge T\}\nonumber\\
    &k_n = \min\{n: t^n_i = T\}.\nonumber
\end{align}
\noindent Note that $X(t)-X(t^n_i)$ is independent of $\mathcal{F}_{t^n_i}$, and that the stopping times in the sequence of partitions $\{\tau_n\}$ are independent to the Brownian component of the process $\xproc$. Moreover, since $\xproc$ is c\`{a}dl\`{a}g, $k_n < \infty$.  Then, define $
X^n(t) := \sum_{i = 0}^{k_n-1} X(t_i^{n})\mathbbm{1}_{[t^n_i,t^n_{i+1})}(t)+X(T)\mathbbm{1}_{\{T\}}$, together with $
    F^n_i(x) = F\left(t^n_{i+1},(X^n_{\wedge t^{n-}_{i+1}})^{x-X^n(t^{n}_i)}\right)$. 
\noindent From the construction of $\tau$, $X^{n}(t)$ converges to $X(t^-)$, except at the jump times of $X$. However, as this set has Lebesgue measure $0$, then
\begin{align}
\esssup_{t \in [0,T]} \| X_n(t)-X(t)\|_2 \to 0.\nonumber    
\end{align}
\noindent Since,  $F(\cdot,X_{\wedge \cdot})$ is bounded, and since $F$ is continuous with respect to $d_*$, \cite[Appendix 1]{Dupire09} ensures that $\| F(t,X^n_{\wedge t})-F(t,X_{\wedge t})\|_\infty \to 0$. The same applies to the space derivatives of $F$, ensuring $\|\nabla F(t,X^n_{\wedge t})-\nabla F(t,X^n_{\wedge t})\|_\infty \to 0$. Next, from equation \eqref{Local_Time_Ito}:

\begin{align*}
    F(T,X^n_{\wedge T})-F(0,X^n(0))&= \sum_{i = 0}^{k_n-1} \left(F(t^n_{i+1},X^n_{\wedge t^n_{ i+1}})-F(t^n_{i+1},X^n_{\wedge t^{n-}_{i+1}})\right)+\sum_{i = 0}^{k_n-1} \left(F(t^n_{i+1},X^n_{\wedge t^{n-}_{i+1}})-F(t^n_i,X^n_{\wedge t^n_i})\right)\nonumber\\
    &= \sum_{i = 0}^{k_n-1} \left(F_i^n(X(t^n_{i+1}))-F_i^n(X(t^n_i))\right)+\int_{t^n_i}^{t^n_{i+1}}D F(t,X^n_{\wedge t_i})\,\mathrm{d}t\nonumber\\
    &= \sum_{i = 0}^{k_n-1} \!\left(\int_{t^n_i}^{t^n_{i+1}} \!\!\!\!\!\!\!\!DF(t,X_{\wedge t_i}^n)\,\mathrm{d}t+\!\!\int_{t^n_i}^{t^n_{i+1}}\!\!\!\!\!\!\!\! \nabla F_i^n(X( t^-))^T\Sigma^{1/2}\mathrm{d}B(t)\right)\nonumber\\
    &+\!\!\!\!\!\!\!\sum_{s \in (t^n_i,t^n_{i+1}]} \!\!\!\!\!\!\!\left(F^n_i(X^n(s))\!\!-\!\!F_i^n(X^n(s^-))\right)\mathbbm{1}_{\|\Delta x(s)\|_2 \geq 1}\nonumber\\
    &+\int_{(t^n_i,t^n_{i+1}]}\int_{\|y\|_2 < 1} \left(F_i^n(X^n(t^-)+x)-F_i^n(X^n(t^-))\right)\tilde{
    N}(\mathrm{d}t,\mathrm{d}x)\nonumber\\
    &-\sum_{j = 1}^m \int_0^t\int_{\mathbb{R}}\mathcal{A}_j\mathcal{I}_j\tilde{F}_i^n(B(s)|_{B^j(s) = x})\mathbbm{1}_{(t^n_i,t^n_{i+1}]}(s)\,\mathrm{d}L_s^x(B^j)\nonumber\\
    &= \int_0^T D F(t,X_{\wedge t}^n)\,\mathrm{d}t
    +\int_0^T \sum_{i = 0}^{k_n-1} \nabla F_i^n(X( t^-))^T\Sigma^{1/2}\mathbbm{1}_{(t^n_i,t^n_{i+1}]}(t)\cdot\mathrm{d}B(t)\nonumber\\
    &+ \int_0^T \int_{\|y\|_2 < 1} \sum_{i =0}^{k_n-1} (F_i^n(X^n(t^-)+y)-F_i^n(X^n(t^-)))\mathbbm{1}_{(t^n_i,t^n_{i+1}]}(t)\,\tilde{N}(\mathrm{d}t,\mathrm{d}y)\nonumber\\
    &+ \sum_{t \in [0,T]} \sum_{i = 0}^{k_n-1} (F^n_i(X^n(t)))-F_i^n(X^n(t^-)))\mathbbm{1}_{\|\Delta x(t)\|_2 \geq 1}\mathbbm{1}_{(t^n_i,t^n_{i+1}]}(t)\nonumber\\
    &-\sum_{j = 1}^m\int_0^t\int_{\mathbb{R}}\mathcal{A}_j\mathcal{I}_j\left(\sum_{i = 0}^{k_n-1}\tilde{F}_i^n(B(s)|_{B^j(s) = x})\mathbbm{1}_{(t^n_i,t^n_{i+1}]}(s)\right)\,\mathrm{d}L_s^x(B^j).\nonumber&
\end{align*}

The convergence of each of these terms is now verified. First, $DF$ is boundedness-preserving, and from the partition taken, $X^n(t) \to X(t)$, almost everywhere, thus by the Dominated Convergence Theorem, the first two integrals converge to $\int_0^T DF(t,X_{\wedge t})\,\mathrm{d}t+\int_0^T \langle \nabla F(t,X(t)),\mu\rangle\,\mathrm{d}t$.

Next, observe that the functional and its space derivatives are left-continuous in time and that $X(t^-)-X(t^n_i) \to 0$, uniformly for all $t \in (t^n_i,t^n_{i+1}]$, once again, from the choice of the partition. So, if $G$ is continuous, $\| G(t^n_{i+1},(X^n_{\wedge t_{i+1}^-})^{X(t^-)-X(t_i)}) - G(t,X_{\wedge t^-})\|_\infty\to 0$. This uniform convergence allows to replicate the argument in \cite{EISENBAUM} to obtain the convergence of the second integral to $\int_0^T \nabla F(t,X_{\wedge t^-})^T\Sigma^{1/2}\cdot\mathrm{d}B(t)$, using the Burkholder–Davis–Gundy’s inequality.\\

Since $\|\Delta X(\cdot)\|_2 \geq 1$ for finitely many $t$, the fourth integral converges to 
\begin{align*}
    \sum_{t \in [0,T]} (F(t,X_{\wedge t})-F(t,X_{\wedge t^-}))\mathbbm{1}_{\|\Delta X(t)\|_2 \geq 1}(t).
\end{align*}

\noindent Let us next analyze the integral with respect to the discontinuous martingale:
\begin{align*}
 \int_0^T \int_{\|y\|_2 < 1} \sum_{i =0}^{k_n-1} (F_i^n(X^n(t^-)+y)-F_i^n(X^n(t^-)))\mathbbm{1}_{(t^n_i,t^n_{i+1}]}(t)\,\tilde{N}(\mathrm{d}t,\mathrm{d}y)\\
 = \int_0^T \int_{\|y\|_2 < 1} \sum_{i =0}^{k_n-1} \int_0^1 \langle\nabla F^n_i(X^n(t^-)+hy),y\rangle\,\mathrm{d}h \mathbbm{1}_{(t^n_i,t^n_{i+1}]}(t)\,\tilde{N}(\mathrm{d}t,\mathrm{d}y)\\
 = \int_0^T \int_{\|y\|_2 < 1} \int_0^1 \sum_{i =0}^{k_n-1}\langle\nabla F^n_i(X^n(t^-)+hy),y\rangle\mathbbm{1}_{(t^n_i,t^n_{i+1}]}(t)\,\mathrm{d}h \,\tilde{N}(\mathrm{d}t,\mathrm{d}y).
\end{align*}
Once again, if $t \not \in \tau_n$ for any $n$, $\sum_{i =0}^{k_n}\langle\nabla F^n_i(X^n(t^-)+hy),y\rangle\mathbbm{1}_{(t^n_i,t^n_{i+1}]}(t)$ converges almost everywhere to $\langle\nabla F(t,X_{\wedge t^-}^{hy}),y\rangle$, and the convergence rate is uniform for $t \not \in \tau_n$. Thus, this convergence occurs almost everywhere in $[0,T]$. At first, let $X$ be a.s.~bounded by a constant $C > 1$, then $\|X(t^-)+hy\|_2 \leq 2C$, and:
\begin{align}
\mathbb{E}\Bigg(\int_0^T \int_{\|y\|_2 < 1}\int_0^1 \sum_{i = 0}^{k_n-1} \langle\nabla F^n_i(X^n(t^-)+hy),y\rangle\mathbbm{1}_{(t^n_i,t^n_{i+1}]}(t)-\nabla \langle F(t,X_{\wedge t^-}^{hy}),y\rangle\,\mathrm{d}h\tilde{N}(\mathrm{d}t,\mathrm{d}y)\Bigg)^2\nonumber&\\
= \mathbb{E}\int_0^T \int_{\{\|y\|_2 < 1\}}\Bigg(\int_0^1 \sum_{i = 0}^{k_n-1} \langle\nabla F^n_i(X^n(t^-)+hy),y\rangle\mathbbm{1}_{(t^n_i,t^n_{i+1}]}(t)-\nabla \langle F(t,X_{\wedge t^-}^{hy}),y\rangle\,\mathrm{d}h\Bigg)^2\nu(dy)\mathrm{dt}\nonumber&\\
\leq \sup_{t \in [0,T], \| x\|_\infty < C}\| \nabla F(t,x_{\wedge t})\|_2^2\,\,\int_0^T \int_{\|y\|_2 < 1} 4C^2y^2\nu(dy)dt < +\infty.\nonumber&
\end{align}
\noindent Thus, by stochastic dominated convergence, this integral converges in probability to \[\int_0^T\int_{\{\|y\|_2 < 1\}} \left(F(t,X_{\wedge t^-}^y)-F(t,X_{\wedge t^-})\right)\tilde{N}(\mathrm{d}t,\mathrm{d}y).\]
\noindent For the first term from the very definition of the $\mathcal{A}_j$ operators,
\begin{align*}
 \sum_{i = 0}^{k_n-1}\int_0^T\int_{\mathbb{R}}\frac{\partial^2}{\partial x_j^2} I_j\tilde{F}_i^n(B(s)|_{B^j(s) = x})\mathbbm{1}_{(t^n_i,t^n_{i+1}]}(s)\,\mathrm{d}L_s^x(B^j)
= \int_0^T\int_{\mathbb{R}}\sum_{i =0}^{k_n-1} \frac{\partial \tilde{F}_i^n}{\partial x^j}(B(s)|_{B^j(s) = x})\mathbbm{1}_{(t^n_i,t^n_{i+1}]}(s)\,\mathrm{d}L_s^x(B^j).   
\end{align*}
\noindent Then, 
\begin{align}
\label{convergence_derivative}
    \mathbb{E}\left\vert\int_0^T\int_{\mathbb{R}}\sum_{i = 0}^{k_n-1}\frac{d\tilde{F}_i^n(B(s)|_{B^j(s) = x})}{dx_j}\mathbbm{1}_{(t^n_i,t^n_{i+1}]}(s)-\langle\Sigma^{1/2}e_j,\nabla F(s,X_{\wedge s^-}^{(\Sigma^{1/2}e_j)x-X^j(s^-)})\rangle\,\mathrm{d}L^x_s\right\vert\nonumber\nonumber\\
    \leq \left\| \sum_{i = 0}^{k_n-1}\frac{\partial \tilde{F}_i^n(x)}{dx_j}\mathbbm{1}_{(t^n_i,t^n_{i+1}]}(s)-\langle\Sigma^{1/2}e_j,\nabla F(s,X_{\wedge s^-}^{x-X(s^-)})\rangle\right\|_L.
\end{align}
\noindent Since $\int_0^T \frac{\|B(t)\|_1}{t}\mathrm{d}t < \infty$, there exists $C > 0$ such that for any $F:[0,T]\times\cadlag\times \mathbb{R}^d \to \mathbb{R}$,\\ $\| F\|_{L} \leq C\| F\|_\infty$, and since in the infinity norm \[\sum_{i = 0}^{k_n-1} \frac{\partial \tilde{F}_i^n(x)}{dx_j}\mathbbm{1}_{(t^n_i,t^n_{i+1}]}(t) \longrightarrow \langle \Sigma^{1/2}e_j,\nabla F(s,X_{\wedge s^-}^{x-X(s^-)})\rangle,\]
the right-hand side of \eqref{convergence_derivative} converges to 0, obtaining convergence to the desired integral in \eqref{Optimal_Ito_Formula}. For the second integral in the $\mathcal A_j$ operator, define: 
\begin{align}
    H^j_{n}(t,x) &:= \int_{\| y\|_2\leq 1}\int_0^1 \sum_{i = 0}^{k_n-1}\Big(\tilde{F}^n_i(x+uRy)-\tilde{F}^n_i(x)\Big)(Ry)^j\mathbbm{1}_{(t^n_i,t^n_{i+1}]}(t)\mathrm{d}u\nu(\mathrm{d}y)\mathrm{d}z\nonumber\\
    &=  \int_{\| y\|_2\leq 1}\int_0^1 \int_0^1\sum_{i = 0}^{k_n-1}u\langle \nabla \tilde{F}^n_i(x+urRy), Ry\rangle\mathbbm{1}_{(t^n_i,t^n_{i+1}]}(t) (Ry)^j\mathrm{d}r\mathrm{d}u\nu(\mathrm{d}y).\nonumber
\end{align}

\noindent Similarly, with $F$ instead of $F^n$ define $H^j$, and obtain:
\begin{align}\label{jump_term}
    \vert H_n^j(t,x)-H^j(t,x)\vert &\leq \int_{\| y\|_2 < 1} \int_0^1\int_0^1\sum_{i = 0}^{k_n-1}u\|(\nabla F(t^n_{i+1},(X^n_{\wedge t^{n\,-}_{i+1}})^{\Sigma^{1/2}(x+ruRy)-X(t^n_i)}))^T\Sigma^{1/2}\nonumber\\
    &-(\nabla F(t,X_{\wedge t^-}^{\Sigma^{1/2}(x+ruRy)-X(t^-)}))^T\Sigma^{1/2}\|_2\mathbbm{1}_{(t^n_i,t^n_{i+1}]}(t)\|Ry\|_2^2\,\mathrm{d}u\mathrm{d}s\nu(\mathrm{d}y).\nonumber\\
\end{align}

As previously, the gradients in \eqref{jump_term} can be assumed to be bounded using stopping times. Then, the difference of gradients in \eqref{jump_term} is bounded by $C$, and for $|x|\leq M$, $\| H_n(t,x)-H(t,x)\|_2 < C\| R\|_{HS}\int_{\| y_2\| \leq 1} \| y\|_2^2\nu(dy)$, where $\| \cdot \|_{HS}$ is the Hilbert-Schmidt norm. Therefore, since $\| F\|_{L} \leq C\| F\|_\infty$, the convergence of this integral is obtained as in the previous case.\\

\noindent For the last term,

\begin{align}
    \sum_{i = 0}^{k_n-1} \int_{\| y\|_2 < 1}&\Bigg(F^n_i(X(t^-)+(I_d-Q)y)-F^n_i(X(t^-))-\langle \nabla F_i^n(X^n(t^-)),(I_d-Q)y\rangle\Bigg)\,\nu(\mathrm{d}y)\mathbbm{1}_{(t^n_i,t^n_{i+1}]}(t)\nonumber&\\
    = \sum_{i = 0}^{k_n-1} \int_{\| y\|_2 < 1}&\int_0^1 \Bigg(\langle\nabla F^n_i(X(t^-)+s(I_d-Q)y),(I_d-Q)y\rangle-\langle \nabla F_i^n(X^n(t^-)),(I_d-Q)y\rangle\Bigg)\mathrm{d}s\mathbbm{1}_{(t_i,t_{i+1}]}(t)\,\nu(\mathrm{d}y).\nonumber&
\end{align}
\noindent Since $X^n_t$ can be assumed to be bounded, $\nabla F$ is continuous in time, and $X^n(t) \to X(t^-)$ a.e. in $[0,T]$, then by the dominated convergence theorem this last integral converges to:
\begin{align}
    \int_0^T\int_{\| y\|_2 < 1}\int_0^1 \Bigg(&\langle\nabla F(t,X_{t^-}^{s(I_d-Q)y}),(I_d-Q)y\rangle-\langle \nabla F(X^n_{t^-}),(I_d-Q)y\rangle\Bigg)\mathrm{d}s\mathbbm{1}_{(t^n_i,t^n_{i+1}]}(t)\,\nu(\mathrm{d}y)\mathrm{d}t.\nonumber
\end{align}

This convergence allows us to obtain all the terms in \eqref{Optimal_Ito_Formula}, giving the result when $X$ and $B$ are 
truncated. The general case is obtained since if $T_M := \inf\{t > 0: \| X(t)\|_2 \vee \| B(t)\|_2 > M\}$, the theorem holds locally for $X(t\wedge T_M)$, and thus for $X(t)$, by taking $M \to \infty$.
\end{proof}

\begin{remark}\label{Remark:Optimal}
    The representation \eqref{Optimal_Ito_Formula} allows to identify 
    \begin{align*}
        F(t, X_{\wedge t}) - F(0, X_{\wedge 0}) &- \int_0^t DF(s, X_{\wedge s})\mathrm{d}s - \int_0^t \langle \nabla F(s, X_{\wedge s}),\mu\rangle\mathrm{d}s
        -\int_0^t \nabla F(s,X_{\wedge s^-})^T \Sigma^{1/2} \mathrm{d}B(s) \\  &- \int_0^t \int_{\|y\| < 1} \left(F(s,X_{\wedge s^-}^y) - F(s, X_{\wedge s^-})\right)\tilde{N}(ds, dy),
    \end{align*}
    
    \noindent with an expression with minimal regularity conditions on $F$. This notion of optimality was introduced in \cite{Eisenbaum09} for classical functions of univariate L\'{e}vy processes. Theorem \ref{Optimal_Functional} extends this result to the case of functionals evaluated on multivariate L\'{e}vy processes.
\end{remark}

\begin{remark}
(i) If $\Sigma$ is a $d\times d$ invertible matrix, then $Q = I$, and the condition $\int_{\| y\|_2 < 1} \| (I_d-Q)y\|_2 \nu(\mathrm{dy}) < \infty$ is immediately satisfied. In this case, \eqref{Optimal_Ito_Formula} turns into,
\begin{align*}
    F(t,X_{\wedge t})&-F(0,X_{\wedge 0}) = \int_0^t DF(s,X_{\wedge s})\,\mathrm{d}s+\int_0^t \langle \nabla F(s,X_{\wedge s}),\mu\rangle\,\mathrm{d}s
+\int_0^t \nabla F(s,X_{\wedge s^-})^T\Sigma^{1/2}\,\mathrm{d}B(s) 
\nonumber\\
&+ \sum_{s \leq t} (F(s,X_{\wedge s})-F(s,X_{\wedge s^-}))\mathbbm{1}_{\{\|\Delta X(s)\|_2 \geq 1\}}
+\int_0^t \int_{\|y\|_2 < 1} \left(F(s,X_{\wedge s^-}^y)-F(s,X_{\wedge s^-})\right)\,\tilde{N}(\mathrm{d}s,\mathrm{d}y) \nonumber\\
&\qquad \qquad \quad -\sum_{j = 1}^m \int_0^t\int_{\mathbb{R}}\mathcal{A}_j\mathcal{I}_jF(s,X_{\wedge s^-}^{(\Sigma^{1/2}e_j) x-X(s^-)})\,\mathrm{d}L_s^x(B^j).
\end{align*}
\noindent
(ii) The decomposition of functionals of weak Dirichlet processes presented in \cite{bouchardOctober} shows that if $F \in \mathscr{C}^{0,1}([0,T],\mathbb{R}^d)$, then under an additional hypothesis, $F(t,X_{\wedge t})$ can be written as the sum of a local martingale and of an orthogonal process. Theorem \ref{Optimal_Functional} shows that, when $(X(t))_{t\in [0,T]}$ is a L\'{e}vy process, and when the weak derivatives are in $\mathscr{C}^{0,0}([0,T],\mathbb{R}^d)$, one obtains an explicit representation of the orthogonal component.
\end{remark}

\begin{remark}
Let us explain how the L\'{e}vy case, without the Brownian component, recovers the Brownian case when $d_D$ is the complete metric associated to the Skorokhod space. For $F \in \mathscr{C}^{1,2}([0,T],\mathbb{R}^d)$, $F(t,\cdot)$ is continuous in said space, and thus given a sequence of processes $(X^n)_{n \geq 1}$ such that $X^n \xrightarrow[n \to \infty]{\mathcal{L}}X$, where $\mathcal{L}$ denotes convergence in law as elements of the Skorokhod space, it follows that
\begin{align*}
    \lim_{n \to \infty}\mathbb{E}(F(t,X^n_{\wedge t})) = \mathbb{E}(F(t,X)).
\end{align*}
Next, following the construction in \cite[Theorem 2.5]{GaussianApprox}, take a measurable family $\{\mu(\cdot|u): u\in S^{d-1}\}$ of L\'{e}vy measures on $(0,+\infty)$, and a finite positive measure $\lambda$ on the Euclidean unit sphere $S^{d-1}$ whose support is not contained in the intersection of the sphere with any hyperplane, such that together they satisfy the condition,
\begin{align*}
    \lim_{\epsilon \to 0^+} \frac{1}{\epsilon^2}\int_0^\epsilon r^2\mu(\mathrm{d}r|u) = \infty,\, \lambda-\text{a.e}.
\end{align*}

\noindent Then, the L\'{e}vy measure $\tilde{\nu}_\epsilon$ defined via:
\begin{align*}
    \tilde{\nu}_\epsilon(\mathrm{d}r,\mathrm{d}u) &:= \mathbbm{1}_{\{r<\epsilon\}}\mu(\mathrm{d}r|u)\lambda(du),\, r>0, u \in S^{d-1},
\end{align*}
\noindent fulfills the conditions of \cite[Theorem 2.2]{GaussianApprox}. Next, let
\begin{align*}
    \Sigma_\epsilon = \int_{\mathbb{R}^d\setminus\{0\}} xx^T\tilde{\nu}(\mathrm{d}x),\,\,\, \tilde{b}_\epsilon = -\int_{\| \Sigma_\epsilon^{1/2}x\|_2\geq 1}\Sigma_\epsilon^{-1/2}x\tilde{\nu}_\epsilon(\mathrm{d}x),
\end{align*}
 \noindent where the existence of $\Sigma^{1/2}$ and $\Sigma^{-1/2}$ is given by \cite[Lemma 2.1]{GaussianApprox}, and the fact that $\lambda$ is not supported on any hyperplane. Then, the L\'{e}vy processes $(\tilde{X}^\epsilon(t))_{t \in [0,T]}$ with characteristic triplet $(\tilde{b}_\epsilon,0,\tilde{\nu}_\epsilon)$ are such that $X^\epsilon := \Sigma_\epsilon^{-1/2} \tilde{X}^\epsilon \xrightarrow[\epsilon \to 0^+]{\mathcal{L}} B$, where $B$ is a multivariate standard Brownian motion, and the L\'{e}vy process $(X^\epsilon(t))_{t \in [0,T]}$ has triplet $(b_\epsilon,0,\nu_\epsilon)$. The fact that $ X^\epsilon \xrightarrow[\epsilon \to 0^+]{\mathcal{L}} B$, is now used to show that for any $F \in \mathscr{C}^{1,2}([0,T],\mathbb{R}^d)$, $F(T,B_{\wedge T})$ has the same distribution as the one given by the functional It\^{o} formula. Indeed, without loss of generality, assume that $F$ and all its derivatives are bounded and, since then $\lim_{\epsilon \to 0^+} \int_0^T \mathbb{E}(DF(t,X^\epsilon_{\wedge t}))\,\mathrm{d}t = \int_0^T \mathbb{E}(DF(t,B_{\wedge t}))\,\mathrm{d}t$, assume further that $DF = 0$, then,
\begin{align*}
    \mathbb{E}\left(F(T,X^\epsilon_{\wedge T})\right)-\mathbb{E}(F(0,X^\epsilon_{\wedge 0})) &= \int_0^T\int_{\mathbb{R}^d\setminus\{0\}} \mathbb{E}\Bigg(F(t,X^{\epsilon, u}_{\wedge t})-F(t,X^\epsilon_{\wedge t^-})-\langle \nabla F(t,X^\epsilon_{\wedge t}), \mu\rangle\Bigg)\,\nu_\epsilon(\mathrm{d}u)\mathrm{d}t\nonumber\\
    &= \int_0^T \int_{\mathbb{R}^d\setminus\{0\}} \mathbb{E}\left(\int_0^1 Tr(\nabla^2 F(t,X^{\epsilon, su}_{\wedge t^-})uu^t)(1-s)\,\mathrm{d}s\right)\nu_\epsilon(\mathrm{d}u)\mathrm{d}t.
\end{align*}

By \cite[Appendix 1]{Dupire09}, the second derivatives $\partial_{i,j} F$ are uniformly continuous, moreover, $\int_{\mathbb{R}^d\setminus\{0\}} u u^t \,\nu_\epsilon(\mathrm{d}u) = I_d$, and assume the derivatives $\partial_{i,j} F$ are uniformly bounded. Then, proceeding as in Remark 2.2 (i) in \cite{Stein},
\begin{align}
    \Big\vert\int_0^T \int_{\mathbb{R}^d\setminus\{0\}} \int_0^1& Tr(\nabla^2 F(t,X^{su}_{\wedge t^-})uu^t)(1-s)\,\mathrm{d}s\nu_\epsilon(\mathrm{d}u)\mathrm{d}t-\frac{1}{2}\int_0^T \int_{\mathbb{R}^d\setminus\{0\}} Tr(\nabla^2 F(t,X_{\wedge t^-})uu^t)\,\nu_\epsilon(\mathrm{d}u)\mathrm{d}t\Big\vert
    \nonumber\longrightarrow 0,\nonumber
\end{align}
Then, since $X^\epsilon \xrightarrow[\epsilon \to 0^+]{\mathcal{L}} B$,
\begin{align*}\label{stable_approx}
    \lim_{\epsilon \to 0^+} \mathbb{E}(F(T,X^\epsilon_{\wedge T})) &= \mathbb{E}(F(0,0))+\int_0^T \mathbb{E}(Tr(\nabla^2F(t,B_{\wedge t})))\,\mathrm{d}t.
\end{align*}
\noindent Next, let $f:\mathbb{R}\to\mathbb{R}$ be a bounded, infinitely differentiable function, with bounded derivatives and define $(V(t))_{t \in [0,T]}$ via,
\begin{align*}
    V(t) := F(0,0)+\int_0^t \frac{1}{2}Tr(\nabla^2 F(s,B_{\wedge s}))\,\mathrm{d}s+\int_0^t \nabla F(s,B_{\wedge s})\cdot \mathrm{d}B(s).
\end{align*}
\noindent Then, a direct application of the classical It\^{o}'s formula shows that for any such $f$, $\mathbb{E}(f(V))$ is equal to the right-hand side of $\mathbb{E}(F(T,B_{\wedge T}))$ and therefore, $V(T)$ has the same distribution as $F(T,B_{\wedge T})$. 
Finally, by applying the Skorokhod representation theorem, there exist random variables $Y^\epsilon$ with the same distribution as $F(T,X^\epsilon_{\wedge T})$ converging a.s.\@ to a random variable having the same distribution as $Z$.
\end{remark}

\printbibliography
\end{document}